\documentclass[reqno,11pt,oneside]{amsart}
\usepackage{geometry}
\geometry{verbose,tmargin=2.5cm,bmargin=2.5cm,lmargin=2.5cm,rmargin=2.5cm}
\usepackage{amsmath, amsthm, amssymb, mathtools, mathrsfs, stmaryrd}
\usepackage{amsfonts}
\usepackage{ascmac}
\usepackage{mathrsfs} 
\usepackage{xcolor}
\definecolor{HY}{RGB}{62,78,200}
\definecolor{HYH}{RGB}{105,90,190}
\usepackage[dvipdfmx, colorlinks=true, linkcolor=HY, citecolor=HYH, urlcolor=HY]{hyperref}
\usepackage{latexsym}
\usepackage{amssymb}
\subjclass[2020]{11M41.}
\address[Anju Yokoi]{Ikeda Senior High School Attached to Osaka Kyoiku University, 1-5-1, Midorigaoka, Ikeda-shi, Osaka, 563-0026, Japan}
\email{anju.scorpion@icloud.com}
\theoremstyle{definition}
\newtheorem{dfn}{Definition}[section]

\newtheorem{lem}[dfn]{Lemma}
\newtheorem{thm}[dfn]{Theorem}

\newtheorem{rem}[dfn]{Remark}

\makeatletter

\@addtoreset{equation}{section}
\makeatother
\allowdisplaybreaks
\title{An interpolation of Bradley's sum formula}
\author{Anju Yokoi}
\begin{document}
\maketitle
\begin{abstract}
  The sum formula for $q$-multiple zeta values is a well-known relation. In this paper, we present its generalization for the $q$-multiple zeta function.
\end{abstract}
\phantom{a}\\
{\small \textbf{Keywords: $q$-multiple zeta values, $q$-multiple zeta function, Sum formula} }
\section{Introduction}
Let $0<q<1$. For any positive integer $m$, the $q$-analog of $m$ is defined by $[m]_q=(1-q^m)/(1-q)$. The $q$-analogue of Euler--Zagier multiple zeta functions (hereafter qMZFs) is defined by 
\[\zeta_q(s_1,\ldots,s_r)\coloneqq\sum_{0<n_1<\cdots<n_r}\prod_{j=1}^r\frac{q^{(s_j-1)n_j}}{[n_j]_q^{s_j}},\]
where $s_j\in\mathbb{C}$ for $j=1,\ldots,r$. The number of the variables $r$ is called the depth of $\zeta_q(s_1,\ldots,s_r)$.
 Zhao \cite{Zhao} proved that the series is absolutely convergent in the domain
\[\{(s_1,\ldots,s_r)\in\mathbb{C}^r\mid\Re(s_{r-k+1}+\cdots+s_r)>k\quad(k=1,\ldots,r)\}\]
and can be analytically continued to $\mathbb{C}^r$ as a meromorphic function with simple poles given by
\begin{equation*}
  \mathfrak{S}_r\coloneqq \left\{ (s_1,\ldots,s_r)\in\mathbb{C}^r \middle|
  \begin{aligned} 
    s_r\in1+\frac{2\pi i}{\log q}\mathbb{Z},\text{or}\; s_r\in\mathbb{Z}_{\le0}+\frac{2\pi i}{\log q}\mathbb{Z}_{\not=0}\\
    \;\text{or}\; s_j+\cdots+s_r\in\mathbb{Z}_{\le r-j+1}+\frac{2\pi i}{\log q}\mathbb{Z},\; j<r
  \end{aligned}
  \right\}
\end{equation*} for $1\le j\le r$. Here the last part in $\mathfrak{S}_r$ is vacuous if $r=1$.
According to \cite[Main Theorem]{Zhao}, taking the limit $q\uparrow1$ of the qMZFs yields the Euler--Zagier multiple zeta functions (hereafter MZFs)
\[\lim_{q\uparrow1}\zeta_q(s_1,\ldots,s_r)=\zeta(s_1,\ldots,s_r)\coloneqq\sum_{0<n_1<\cdots<n_r}\prod_{j=1}^r\frac{1}{n_j^{s_j}}.\]
The special values $\zeta_q(k_1,\ldots,k_r)$ with $k_j\in\mathbb{Z}_{>0}$ for $j=1,\ldots,r-1$ and $k_r\ge2$ are called $q$-multiple zeta values (hereafter, qMZVs). We call such tuple $(k_1,\ldots,k_r)$ the admissible index. Among the many known $\mathbb{Q}$-linear relations for $\zeta_{q}(k_1, \ldots, k_r)$, the $q$-sum formula is regarded as one of the most fundamental. 

\begin{thm}[{Sum formula for qMZV; Bradley \cite{Dav}}]\label{Theorem 1}
For integers $k$ and $r$ with $1\le r\le k-1$, set
\[\mathcal{I}_0(k,r)\coloneqq\{(k_1,\ldots,k_r)\in\mathbb{Z}_{>0}^r\mid k_1+\cdots+k_r=k,k_r\ge2\}.\]
Then we have
  \begin{align*}
    \sum_{\mathbf{k}\in \mathcal{I}_0(k,r)}\zeta_q(\mathbf{k})=\zeta_q(k).
  \end{align*}
\end{thm}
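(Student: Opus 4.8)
\emph{Strategy.} The plan is to package the whole family of identities into a single generating-function identity and reduce it to a $q$-series evaluation. For $r\ge 1$ and $k>r$ set $S_r(k)\coloneqq\sum_{\mathbf k\in\mathcal I_0(k,r)}\zeta_q(\mathbf k)$, and note $\mathcal I_0(k,r)=\varnothing$ for $k\le r$. Since the reindexing $(r,k)\mapsto(k-r,r-1)$ is a bijection onto $\mathbb Z_{\ge 1}\times\mathbb Z_{\ge 0}$, the assertion $S_r(k)=\zeta_q(k)$ for all $1\le r\le k-1$ is equivalent to the identity, valid for $|x|,|y|$ sufficiently small,
\[
 F(x,y)\coloneqq\sum_{r\ge 1}\sum_{k>r}S_r(k)\,x^{k-r}y^{r-1}\;=\;\sum_{r\ge 1}\sum_{k>r}\zeta_q(k)\,x^{k-r}y^{r-1}\eqqcolon G(x,y),
\]
both series being absolutely convergent in that range. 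Summing the finite geometric series in $r$ for fixed $k$ gives $G(x,y)=\sum_{k\ge 2}\zeta_q(k)\,\frac{x^{k}-xy^{k-1}}{x-y}$, and inserting the identity $\sum_{k\ge 2}\zeta_q(k)t^{k}=t^{2}\sum_{m\ge 1}\frac{q^{m}}{[m]_q([m]_q-tq^{m})}$ (a geometric-series resummation of the definition of $\zeta_q$) rewrites the right-hand side as an explicit series in $q$.

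Now I would compute $F$. Substituting the defining series of $\zeta_q(\mathbf k)$, interchanging the absolutely convergent sums, and collapsing the sums over the exponents by the geometric series — with $b_n\coloneqq[n]_q-xq^{n}$ one has $\sum_{k\ge 1}x^{k-1}q^{(k-1)n}[n]_q^{-k}=b_n^{-1}$ and $\sum_{k\ge 2}x^{k-1}q^{(k-1)n}[n]_q^{-k}=xq^{n}\bigl([n]_q b_n\bigr)^{-1}$ — one gets
\[
 F(x,y)=x\sum_{r\ge 1}y^{r-1}\sum_{0<n_1<\cdots<n_r}\Bigl(\prod_{j=1}^{r-1}\frac1{b_{n_j}}\Bigr)\frac{q^{n_r}}{[n_r]_q\,b_{n_r}}.
\]
Summing first over the positions $n_1<\cdots<n_{r-1}$ below $n_r=N$ and recognising $\sum_{s\ge 0}y^{s}e_s(z_1,\ldots,z_{N-1})=\prod_{n=1}^{N-1}(1+yz_n)$ (the generating function of elementary symmetric polynomials, here with $z_n=1/b_n$) reduces this to the single sum $F(x,y)=x\sum_{N\ge 1}\frac{q^{N}}{[N]_q b_N}\prod_{n=1}^{N-1}\bigl(1+\tfrac{y}{b_n}\bigr)$. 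Using $[n]_q=\frac{1-q^{n}}{1-q}$ one checks that $b_n=\frac{1-cq^{n}}{1-q}$ and $b_n+y=\frac{e-cq^{n}}{1-q}$ with $c\coloneqq1+x(1-q)$, $e\coloneqq1+y(1-q)$, so $1+y/b_n=\frac{e-cq^{n}}{1-cq^{n}}$ and the product telescopes into a ratio of $q$-Pochhammer symbols; hence $F$ becomes an explicit basic hypergeometric series whose parameters are built from $c$ and $e$.

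It then remains to prove $F=G$, that is, to evaluate this $q$-series in closed form and match it with the expression obtained for $G$; this is the crux. I expect the evaluation to follow either from a $q$-Gauss/Heine-type summation formula, or — in a more self-contained spirit — from a telescoping identity for the summand, found via an ansatz of the form $h(N)=\bigl(\alpha+\tfrac{\beta}{1-q^{N}}\bigr)\prod_{n=1}^{N-1}\tfrac{e-cq^{n}}{1-cq^{n}}$ (augmented by a further term if necessary), with $\alpha,\beta$ chosen so that $h(N)-h(N+1)$ reproduces the summand, after which $\sum_{N\ge 1}$ telescopes. The main obstacle is carrying this step out while respecting conditional convergence: the tempting partial-fraction splittings of the summand separate genuinely divergent tails, so the computation must be organised on finite truncations and the limit taken only at the end. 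Once the $q$-series identity is in hand, comparing the coefficients of $x^{k-r}y^{r-1}$ in $F$ and $G$ yields $S_r(k)=\zeta_q(k)$ for all $1\le r\le k-1$, which is the theorem. (A generating-function-free alternative is the connected-sum method: one seeks the correct $q$-deformation of a ``connector'' interpolating the two sides of the formula and proves the corresponding transport relation.)
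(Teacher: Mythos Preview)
Your approach is entirely different from what the paper does. The paper does not give a self-contained proof of Theorem~1 at all: it is quoted from Bradley, and then recovered as the integer specialisation of the paper's main result $G_q^{(0,b)}(s)=\zeta_q(s)$ (Theorem~\ref{thm2}). That result, in turn, is proved by a recursive telescoping argument in depth: Lemma~\ref{Blem} shows inductively that $G_q^{(a,b)}$ admits the explicit single-series representation~\eqref{label8}, using the decomposition $F_q^{(0)}=F_q^{(1)}-F_q^{(2)}-F_q^{(3)}$ of Lemma~\ref{Flem2}; evaluating \eqref{label8} at $a=0$ collapses to $\zeta_q(s)$. So the paper's route is: fix $k$ and $r$ and telescope the inner structure, with analytic control supplied by Lemma~\ref{Flem1}. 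Your route packages \emph{all} $k,r$ into one two-variable generating function and tries to evaluate it in closed form. That is a perfectly classical line of attack on sum formulas and your reductions of $F(x,y)$ and $G(x,y)$ are correct (the identifications $b_n=(1-cq^n)/(1-q)$ and $1+y/b_n=(e-cq^n)/(1-cq^n)$ are right, and the product is indeed a ratio of $q$-Pochhammer symbols).

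The gap is that you stop exactly at the point where the theorem has content. After your reduction, everything has been repackaged without loss: proving $F=G$ is equivalent to the original statement, and you have not proved it. Saying that you ``expect the evaluation to follow'' from a Heine/$q$-Gauss summation or from a telescoping ansatz $h(N)=(\alpha+\beta/(1-q^N))\prod_{n<N}(e-cq^n)/(1-cq^n)$ is not a proof; you need to either (i) name the precise summation theorem and check that your series is an instance of it (your ${}_2\phi_1$ has parameters determined by $c=1+x(1-q)$ and $e=1+y(1-q)$, and you should verify the balancing/terminating hypotheses actually hold, or else explain the analytic continuation), or (ii) actually solve for $\alpha,\beta$ and verify the telescoping identity term by term, handling the boundary term at $N\to\infty$. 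Your own warning about conditional convergence is apt: the obvious partial-fraction split of $G$ into two sums over $m$ already mixes terms that individually encode $\zeta_q(1)$, so the matching with $F$ cannot be done termwise and must be organised on truncations. Until that step is written out, what you have is a correct reformulation, not a proof. (Your parenthetical alternative via a connected-sum connector is also viable, but equally unexecuted.)
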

Among the fundamental relations for multiple zeta values (hereafter MZVs), the sum formula stands out as one of the most basic. Hirose--Murahara--Onozuka \cite{HMO} obtained the following theorem through complex analytic interpolation of this formula.
\begin{thm}[{Sum formula for MZF; Hirose--Murahara--Onozuka \cite[Theorem 1.2]{HMO}}]\label{Theorem 2}
For $s\in\mathbb{C}$ with $\Re(s)>1$ and $s\not=2$ we have
\[\sum_{n=0}^\infty(\zeta(s-n-2,n+2)-\zeta(-n,s+n))=\zeta(s).\]
\end{thm}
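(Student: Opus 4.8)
The plan is to prove the identity first for $\Re(s)>2$, where every series in sight converges absolutely, and then to propagate it to $\{\Re(s)>1\}\setminus\{2\}$ by analytic continuation. So fix $s$ with $\Re(s)>2$. From the defining double series one has, for each $n\ge0$, $\zeta(s-n-2,n+2)=\sum_{0<m<\ell}m^{-s}(m/\ell)^{n+2}$ and $\zeta(-n,s+n)=\sum_{0<m<\ell}\ell^{-s}(m/\ell)^{n}$. Since $\sum_{0<m<\ell}m^{2-\Re(s)}/(\ell(\ell-m))$ and $\sum_{0<m<\ell}\ell^{1-\Re(s)}/(\ell-m)$ are finite for $\Re(s)>2$, interchanging the summation over $n$ with that over $0<m<\ell$ is legitimate; summing the geometric series and using the elementary identity $\sum_{k\ge1}\frac1{(m+k)k}=\frac{H_m}{m}$ (with $H_m=\sum_{k=1}^m\frac1k$ and $H_0=0$) gives
\[\sum_{n\ge0}\zeta(s-n-2,n+2)=\sum_{0<m<\ell}\frac{m^{2-s}}{\ell(\ell-m)}=\sum_{m\ge1}H_m\,m^{1-s},\]
\[\sum_{n\ge0}\zeta(-n,s+n)=\sum_{0<m<\ell}\frac{\ell^{1-s}}{\ell-m}=\sum_{\ell\ge1}H_{\ell-1}\,\ell^{1-s}.\]
Both right-hand series converge absolutely for $\Re(s)>2$, so subtracting them termwise and using $H_m-H_{m-1}=\frac1m$ yields $\sum_{n\ge0}\bigl(\zeta(s-n-2,n+2)-\zeta(-n,s+n)\bigr)=\sum_{m\ge1}m^{-s}=\zeta(s)$, which is the assertion for $\Re(s)>2$.

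To reach $\Re(s)>1$, $s\ne2$, I would argue by analytic continuation. By the meromorphic continuation of $\zeta(s_1,s_2)$ recalled in the Introduction, each $\zeta(s-n-2,n+2)$ and each $\zeta(-n,s+n)$ is meromorphic in $s$ whose only possible singularity in $\Re(s)>1$ is a simple pole at $s=2$; at $s=2$ both carry the same residue $\frac1{n+1}$, so every summand $\zeta(s-n-2,n+2)-\zeta(-n,s+n)$ is holomorphic on $\Re(s)>1$. I would then prove that $\sum_{n\ge0}\bigl(\zeta(s-n-2,n+2)-\zeta(-n,s+n)\bigr)$ converges locally uniformly on $\{\Re(s)>1\}\setminus\{2\}$ and hence defines a holomorphic function there; since it agrees with $\zeta(s)$ for $\Re(s)>2$, the identity theorem gives the conclusion on all of $\{\Re(s)>1\}\setminus\{2\}$.

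The crux is this last convergence statement. The two ``halves'' $\sum_n\zeta(s-n-2,n+2)$ and $\sum_n\zeta(-n,s+n)$, whose values for $\Re(s)>2$ were computed above, continue analytically to $\zeta(1,s-1)+\zeta(s)$ and $\zeta(1,s-1)$ respectively, but for $\Re(s)\le2$ neither of these functions is still represented by its series: the individual terms no longer decay fast enough. Convergence of the difference survives only thanks to a cancellation between the two $n$-th terms, and making this quantitative — establishing a bound of the shape $\zeta(s-n-2,n+2)-\zeta(-n,s+n)=O(n^{-1-\delta})$ uniformly for $s$ in compact subsets of $\{\Re(s)>1\}\setminus\{2\}$ directly from the meromorphic continuation (for instance via a Mellin--Barnes or Euler--Maclaurin expansion of each double zeta value followed by a careful comparison of their leading parts) — is the main obstacle. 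It is also precisely what fails at $s=2$, the pole of $\zeta(1,s-1)$, which is why that point must be excluded.
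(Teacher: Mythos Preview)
Your argument for $\Re(s)>2$ is correct and is essentially the $q\to1$ specialization of the paper's computation for Theorem~\ref{thm1} in that range (equations (\ref{label3})--(\ref{label2})); the harmonic-number telescoping you use is precisely what Lemma~\ref{lem1} becomes in the classical limit. Your outline for extending to $1<\Re(s)\le2$ --- each difference $\zeta(s-n-2,n+2)-\zeta(-n,s+n)$ is holomorphic there, so prove locally uniform convergence and invoke the identity theorem --- is also exactly the paper's strategy.

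The gap is the one you yourself name: you do not actually prove the estimate $\zeta(s-n-2,n+2)-\zeta(-n,s+n)=O(n^{-1-\delta})$, and without it the analytic-continuation step is unjustified. The paper's route (carried out for the $q$-analogue, following \cite{HMO}) is concrete, and your guess ``Euler--Maclaurin expansion'' is on target. One first obtains, for $\Re(s)>1$ and $\Re(\alpha)>1$, the representation
\[
\zeta(s-\alpha,\alpha)=-\sum_{m>0}\frac{\alpha}{m^{s+1}}\int_0^\infty\Bigl(\frac{m}{m+u}\Bigr)^{\alpha+1}(u-\lfloor u\rfloor)\,du+\frac{\zeta(s-1)}{\alpha-1}
\]
(the classical limit of Lemma~\ref{lem2} and the lemma following it), then differentiates in $\alpha$ and bounds the resulting pieces using the beta-function inequality $\sum_{m>0}m^{-s-1}\bigl(m/(m+u)\bigr)^{\alpha+1}\le 2^{s+1}B(\alpha+1-s,s)\,u^{-s}$ of \cite[Lemma~2.3]{HMO}. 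Since $B(\alpha+1-s,s)=O(\alpha^{-s})$, this yields $\partial_\alpha\zeta(s-\alpha,\alpha)=O(\Re(\alpha)^{-1-\delta})$ for any $0<\delta<\min(1,\sigma-1)$ (Lemma~\ref{lem5}), and integrating over the segment from $\alpha=n+2$ to $\alpha=n+s$ gives the required $O(n^{-1-\delta})$ bound (Lemma~\ref{lem6}). Your remark that the two half-sums continue analytically to $\zeta(1,s-1)+\zeta(s)$ and $\zeta(1,s-1)$ is correct but, as you note, does not by itself license termwise subtraction of the divergent series when $\Re(s)\le2$; this derivative-and-integrate argument is what supplies the missing cancellation quantitatively.
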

In this paper, we give a generalization of Theorem \ref{Theorem 1} and Theorem \ref{Theorem 2}. Here we set $\mathfrak{A}\coloneqq\{s\in\mathbb{C}\mid \mathbb{Z}_{\le2}+\frac{2\pi i}{\log q}\mathbb{Z}\}$.
\begin{thm}\label{thm1}
For $s\in\mathbb{C}\setminus\mathfrak{A}$, we have
  \begin{align*}
\sum_{n=0}^\infty\left(\zeta_q(s-n-2,n+2)-\zeta_q(-n,s+n)\right)&=\zeta_q(s).
\end{align*}
\end{thm}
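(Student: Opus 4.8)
The plan is to prove the identity first on the half–plane $\{\,\Re(s)>2\,\}$, where every series below converges absolutely, and then to propagate it to $D:=\{\,\Re(s)>1\,\}\setminus\bigl(2+\tfrac{2\pi i}{\log q}\mathbb{Z}\bigr)$ by analytic continuation. Write $[m]=[m]_q$ and fix $s$ with $\Re(s)>2$. Expanding the two depth–two series and grouping, for each $n$, the $(a,b)$–terms of $\zeta_q(s-n-2,n+2)$ and of $\zeta_q(-n,s+n)$ over the same pair $0<a<b$, one checks that each of the two summands is a coefficient independent of $n$ times $r_{a,b}^{\,n}$, with the \emph{same} ratio $r_{a,b}=\dfrac{[a]\,q^{b}}{q^{a}\,[b]}$; here $0<r_{a,b}<1$ because $a<b$ forces $[a]<[b]$ and $q^{b}<q^{a}$. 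Using $[b]-[a]\ge q^{a}$ (equality at $b=a+1$) one sees the resulting triple series converges absolutely, so $\sum_{n\ge 0}$ may be interchanged with $\sum_{0<a<b}$ and the geometric series in $n$ summed. The key elementary identity is
\[
  q^{a}[b]-q^{b}[a]=\frac{q^{a}-q^{b}}{1-q}=[b]-[a],
\]
which gives $\sum_{n\ge 0}r_{a,b}^{\,n}=\dfrac{q^{a}[b]}{[b]-[a]}$ and, after simplification,
\[
  \sum_{n=0}^{\infty}\bigl(\zeta_q(s-n-2,n+2)-\zeta_q(-n,s+n)\bigr)
  =\sum_{0<a<b}\frac{q^{b}}{[b]\,([b]-[a])}\,\bigl(f(a)-f(b)\bigr),
  \qquad f(m):=\frac{q^{(s-2)m}}{[m]^{\,s-2}}.
\]

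To evaluate the right–hand side I use the partial–fraction identity $\dfrac{1}{[b]([b]-[a])}=\dfrac{1}{[a]}\Bigl(\dfrac{1}{[b]-[a]}-\dfrac{1}{[b]}\Bigr)$ to split the summand as $A(a,b)-B(a,b)$ with
\[
  A(a,b)=\frac{q^{b}f(a)}{[a]}\Bigl(\tfrac1{[b]-[a]}-\tfrac1{[b]}\Bigr),
  \qquad
  B(a,b)=\frac{q^{b}f(b)}{[a]}\Bigl(\tfrac1{[b]-[a]}-\tfrac1{[b]}\Bigr),
\]
both nonnegative and, for $\Re(s)>2$, summable over $0<a<b$. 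The relations $[b]-[a]=q^{a}[b-a]$ and $[a+c]=[a]+q^{a}[c]$ evaluate the inner sums in terms of the $q$–harmonic numbers $H^{(q)}_m:=\sum_{j=1}^{m}\tfrac{q^{j}}{[j]}$, a bounded sequence since $\tfrac{q^{j}}{[j]}\le q^{j}$: one gets $\sum_{b>a}q^{b}\bigl(\tfrac1{[b]-[a]}-\tfrac1{[b]}\bigr)=H^{(q)}_a$, while $\sum_{a=1}^{b-1}\tfrac1{[a]}\bigl(\tfrac1{[b]-[a]}-\tfrac1{[b]}\bigr)=\tfrac{q^{-b}}{[b]}H^{(q)}_{b-1}$ (the $\tfrac1{[a][b]}$–terms cancel inside the summand). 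Hence
\[
  \sum_{0<a<b}\bigl(A(a,b)-B(a,b)\bigr)
  =\sum_{m\ge1}\frac{q^{(s-2)m}}{[m]^{\,s-1}}\,H^{(q)}_{m}
  -\sum_{m\ge1}\frac{q^{(s-2)m}}{[m]^{\,s-1}}\,H^{(q)}_{m-1}
  =\sum_{m\ge1}\frac{q^{(s-2)m}}{[m]^{\,s-1}}\cdot\frac{q^{m}}{[m]}
  =\zeta_q(s),
\]
using $H^{(q)}_{m}-H^{(q)}_{m-1}=q^{m}/[m]$. This settles the theorem for $\Re(s)>2$.

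It remains to extend the identity to $D$, which is open and connected. By Zhao's theorem each $\zeta_q(s-n-2,n+2)$ and $\zeta_q(-n,s+n)$ ($n$ fixed) is holomorphic on $D$: in the region $\Re(s)>1$ the only candidate pole comes from $s_1+s_2=s\in\mathbb{Z}_{\le2}+\tfrac{2\pi i}{\log q}\mathbb{Z}$, i.e.\ $s\in 2+\tfrac{2\pi i}{\log q}\mathbb{Z}$, which has been removed, and $\zeta_q(s)$ is holomorphic on $\{\Re(s)>1\}$. So if the series on the left converges locally uniformly on $D$, it defines a holomorphic function there agreeing with $\zeta_q(s)$ on $\{\Re(s)>2\}$, and the identity theorem finishes the proof.

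The one remaining step — and the place where I expect the real work to lie — is a bound, uniform on compact subsets $K\subset D$, of the form $\zeta_q(s-n-2,n+2),\,\zeta_q(-n,s+n)=O(\rho^{\,n})$ for some $\rho=\rho(K,q)\in(0,1)$. For $\Re(s)>2$ this already follows from the estimates used above, with decay governed by $1/[2]_q$; for $1<\Re(s)\le 2$, however, the defining double series diverge, so the bound must be extracted from Zhao's construction of the meromorphic continuation, keeping careful track of the dependence on $n$ as one argument tends to $-\infty$ and the other to $+\infty$. This is the $q$–analogue of the convergence input underlying Hirose--Murahara--Onozuka's Theorem~\ref{Theorem 2}.
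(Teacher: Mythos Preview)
Your computation on $\{\Re(s)>2\}$ is correct and matches the paper's argument in substance: the same geometric series in $r_{a,b}=q^{b-a}[a]/[b]$, the same identity $q^a[b]-q^b[a]=[b]-[a]$, and the same telescoping over $q$--harmonic sums (the paper phrases the last step via Lemma~\ref{lem1} rather than via $H^{(q)}_m$, but the content is identical).

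The gap is exactly where you flag it, and it is a real one. You have not established locally uniform convergence on $D$, and your proposed route---extracting an exponential bound $\zeta_q(s-n-2,n+2),\ \zeta_q(-n,s+n)=O(\rho^n)$ from Zhao's continuation---is both unexecuted and likely too optimistic. The paper's own analysis (Lemmas~\ref{lem2}--\ref{lem6}) shows that for $1<\sigma<2$ the individual values behave like $\zeta_q(s-\alpha,\alpha)=-\alpha H_q^{(1)}(\alpha)+\zeta_q(s-1)/(\alpha-1)$ with $H_q^{(1)}(\alpha)=O(\alpha^{-s})$, so each summand is only $O(n^{1-s})$, which is not summable; there is no reason to expect geometric decay of the separate terms in the analytically continued region. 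What the paper does instead is substantial and different from what you sketch: it produces an Euler--Maclaurin--type integral representation of $\zeta_q(s-\alpha,\alpha)$ valid for all $\Re(s)>1$ and $\Re(\alpha)>1$ (Lemma~\ref{lem2} and the lemma following it), differentiates in $\alpha$, and proves $\partial_\alpha\zeta_q(s-\alpha,\alpha)=O(\Re(\alpha)^{-1-\delta})$ uniformly on horizontal strips (Lemma~\ref{lem5}) by comparison with the classical estimates of Hirose--Murahara--Onozuka via a pointwise inequality between the $q$-- and classical kernels (Lemmas~\ref{lem3}, \ref{lem4}). Integrating the derivative from $\alpha=n+s$ to $\alpha=n+2$ then yields
\[
\bigl|\zeta_q(s-n-2,n+2)-\zeta_q(-n,s+n)\bigr|=O\!\left(n^{-1-\delta}\right)
\]
uniformly on compacta (Lemma~\ref{lem6}), which is what drives the analytic continuation. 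In short: the convergence input is polynomial decay of the \emph{difference}, obtained from a purpose-built integral formula, not exponential decay of the individual terms read off from the general meromorphic continuation. Your proposal stops precisely before this step, so as written it does not prove the theorem on $1<\Re(s)\le 2$.
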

\begin{rem}
  This theorem holds in a wider range than Theorem \ref{Theorem 2}.
  By specializing a complex number $s$ at a positive integer $k$ satisfying $k>1$, we obtain the depth-$2$ case of Theorem \ref{Theorem 1}. Furthermore, taking the limit as $q\to1$ yields Theorem \ref{Theorem 2}.
\end{rem}
This also can be generalized to the arbitrary depth.
\begin{dfn}
  For a non-negative integer $a$ and a positive integer $b$, we define $G_q^{(a,b)}(s_1,\ldots,s_a;s)$ inductively by
  \begin{align*}
    G_q^{(a,1)}(s_1,\ldots,s_a;s)&\coloneqq\zeta_q(s_1,\ldots,s_a,s)\\
    G_q^{(a,b)}(s_1,\ldots,s_a;s)&\coloneqq\begin{multlined}[t]\sum_{n=0}^\infty G_q^{(a+1,b-1)}(s_1,\ldots,s_a,s-n-b;n+b)\\-\sum_{n=0}^\infty G_q^{(a+1,b-1)}(s_1,\ldots,s_a,-n,s+n),\end{multlined}
  \end{align*}
  where $s_1,\ldots,s_a,s$ are complex variables such that $\Re(s)>b$, $\Re(s+s_{a-k+1}+\cdots+s_a)>k+b$ for $k=1,\ldots,a$.
\end{dfn}
\begin{rem}
  We note that $G_q^{(a,b)}(s_1,\ldots,s_a;s)$ is a sum of qMZFs of depth $a+b$.
\end{rem}
Using this notation, we have the following generalization for higher depths of Theorem \ref{thm1}.
\begin{thm}\label{thm2}
  Let $b$ be a positive integer. For $s\in\mathbb{C}$ with $\Re(s)>b$, we have 
  \[G_q^{(0,b)}(s)=\zeta_q(s).\]
\end{thm}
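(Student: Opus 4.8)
The plan is to prove, by induction on $b$ and for \emph{every} spectator depth $a$ at once, an explicit closed form for $G_q^{(a,b)}$; Theorem \ref{thm2} is then the case $a=0$, obtained in one line. For a nonnegative integer $m$ and a positive integer $c$ put
\[
\mathcal{K}_c(N,m)\coloneqq\sum_{N-m\le k_1\le k_2\le\cdots\le k_{c-1}\le N}\ \prod_{j=1}^{c-1}\frac{q^{k_j}}{[k_j]_q}
\]
(so $\mathcal{K}_1\equiv 1$; and with the convention $n_0=0$ the chain degenerates to $k_1=\cdots=k_{c-1}=N$, whence $\mathcal{K}_c(N,0)=\bigl(q^{N}/[N]_q\bigr)^{c-1}$). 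The statement to prove is: for all $a\ge 0$, $b\ge 1$, and all complex variables in the domain of the definition of $G_q^{(a,b)}$,
\begin{equation}\tag{$\ast$}
G_q^{(a,b)}(s_1,\ldots,s_a;s)=\sum_{0<n_1<\cdots<n_a<N}\ \Biggl(\prod_{i=1}^{a}\frac{q^{(s_i-1)n_i}}{[n_i]_q^{s_i}}\Biggr)\,\frac{q^{(s-b)N}}{[N]_q^{s-b+1}}\ \mathcal{K}_b(N,n_a).
\end{equation}
Setting $a=0$ in $(\ast)$ collapses $\mathcal{K}_b(N,0)=q^{(b-1)N}/[N]_q^{b-1}$, so the summand becomes $q^{(s-1)N}/[N]_q^{s}$ and $(\ast)$ reads $G_q^{(0,b)}(s)=\sum_{N\ge1}q^{(s-1)N}/[N]_q^{s}=\zeta_q(s)$, which is Theorem \ref{thm2}.

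For $b=1$, $(\ast)$ is just the defining equality $G_q^{(a,1)}(s_1,\ldots,s_a;s)=\zeta_q(s_1,\ldots,s_a,s)$ with $\mathcal{K}_1\equiv1$; and for $b=2$, $a=0$, $(\ast)$ is Theorem \ref{thm1}. The inductive step, which is uniform for $b\ge2$ and in particular reproves Theorem \ref{thm1} when $b=2$, runs as follows. Assume $(\ast)$ for $b-1$. Substitute the inductive form of $G_q^{(a+1,b-1)}$ into the recursion defining $G_q^{(a,b)}$ and sum out the recursion parameter $n$ by the geometric series — using only $\sum_{n\ge0}r^n=(1-r)^{-1}$ and $[N]_q-q^{N-m}[m]_q=[N-m]_q$, as in the proof of Theorem \ref{thm1} — writing $m$ for the new spectator index $n_{a+1}$. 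This produces a double sum over $n_a<m<N$ whose summand is $\mathcal{K}_{b-1}(N,m)$ times $A-B$ with
\[
A=\frac{q^{(s-b-1)m}q^{N}}{[m]_q^{s-b}[N]_q[N-m]_q},\qquad B=\frac{q^{-m}q^{(s-b+1)N}}{[N]_q^{s-b+1}[N-m]_q}.
\]

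For the $B$-contribution, sum over $m$ and substitute $d=N-m$: using $\mathcal{K}_{b-1}(N,N-d)=\sum_{d\le k_1\le\cdots\le k_{b-2}\le N}\prod_j q^{k_j}/[k_j]_q$ one gets $-\sum_{N>n_a}\frac{q^{(s-b)N}}{[N]_q^{s-b+1}}$ times the portion of the full weakly increasing $q$-harmonic sum $H^{(q)}_{N;b-1}\coloneqq\sum_{1\le k_1\le\cdots\le k_{b-1}\le N}\prod_j q^{k_j}/[k_j]_q$ supported on chains whose least member is $<N-n_a$. For the $A$-contribution, apply once more the $q$-partial fraction $\frac{q^{N}}{[N]_q[N-m]_q}=\frac{q^{N}}{[m]_q[N-m]_q}-\frac{q^{m+N}}{[m]_q[N]_q}$, sum over $N$ and then over $m$; the outcome is $+\sum_{N>n_a}\frac{q^{(s-b)N}}{[N]_q^{s-b+1}}H^{(q)}_{N;b-1}$, the \emph{entire} $q$-harmonic sum. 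Adding the two, what survives is $\sum_{N>n_a}\frac{q^{(s-b)N}}{[N]_q^{s-b+1}}$ times the chains with least member $\ge N-n_a$, i.e. $\mathcal{K}_b(N,n_a)$, which is $(\ast)$ for $b$.

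The hard part will be this last reorganisation: keeping exact control of how the nested windows $\mathcal{K}_c$ grow under the recursion, and verifying the complementarity — that the ``short-at-the-bottom'' chains subtracted by the $B$-term are precisely the defect left by the $A$-term — uniformly in $b$. A secondary nuisance is that the $A$-computation transiently involves one-fold sums of the shape $\sum_{k\ge1}q^{k}/[k]_q$, convergent for $0<q<1$ but divergent as $q\to1$; these must be shown to telescope out, just as $\sum_{v>u}\bigl(q^{v}/[v-u]_q-q^{u+v}/[v]_q\bigr)=q^{u}\sum_{k=1}^{u}q^{k}/[k]_q$ does at depth two. Finally, one has to justify interchanging the infinitely many summations over the recursion parameters with the qMZF series throughout the stated domain of absolute convergence.
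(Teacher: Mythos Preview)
Your strategy is essentially the paper's: the closed form $(\ast)$ is exactly Lemma~\ref{Blem}, and your complementarity argument between the $A$- and $B$-contributions is precisely Lemma~\ref{Flem2} (the paper splits $A$ via the same $q$-partial fraction into its $F_q^{(1)}-F_q^{(2)}$, which yields your full harmonic sum $H^{(q)}_{N;b-1}$, while your $B$-term is $F_q^{(3)}$, so the difference is $F_q^{(0)}=\mathcal{K}_b$). One minor caveat: the induction lives in $\sigma>b$, so for $b=2$ it only recovers the $\sigma>2$ half of Theorem~\ref{thm1}, not the analytic extension to $1<\sigma\le2$ --- but this is irrelevant for Theorem~\ref{thm2} itself.
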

\begin{rem}
  If $s\in\mathbb{Z}_{>b}$, $G_q^{(a,b)}(s_1,\ldots,s_a;s)$ is equal to
  \[\sum_{\substack{m_1+\cdots+m_b=s\\m_1,\ldots,m_{b-1}\ge1\\m_b\ge2}}\zeta_q(s_1,\ldots,s_a,m_1,\ldots,m_b)\]
  by definition. Thus the case $s\in\mathbb{Z}_{>b}$ of the theorem implies Theorem \ref{Theorem 1}.
\end{rem}
\section*{Acknowledgement}
The author would like to express his sincere gratitude to Prof. Minoru Hirose for his many helpful suggestions and valuable advice.
The author is also grateful to the referee for a careful reading of the manuscript.
The author would like to thank Hanamichi Kawamura for his assistance in revising the manuscript. Thanks are also due to Takumi Maesaka, Nozomu Osanai, and Itto Matsubara for their valuable comments and advice.
This work was supported by the Academic Research Club of KADOKAWA DWANGO Educational Institute.
\section{Proof of Theorem \ref{thm1}}
In what follows, $s$ always stands for a complex number and $\sigma$ for its real part.
First, we prove Theorem \ref{thm1} for $\sigma>2$ by series transformation.
\begin{lem}\label{lem1}
Let $m_1$ be a positive integer. Then we have 
  \[
    \sum_{m>0}\left(\frac{1}{[m]_q}-\frac{1}{[m+m_1]_q}\right)=\left(\frac{1}{[1]_q}+\frac{1}{[2]_q}+\cdots+\frac{1}{[m_1]_q}\right)-m_1(1-q).
  \]
\end{lem}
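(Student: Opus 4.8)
The plan is to evaluate the series by telescoping and then computing the limit of the remaining boundary block. First I would fix an integer $M\ge m_1$ and rewrite the partial sum
\[
S_M \coloneqq \sum_{m=1}^{M}\left(\frac{1}{[m]_q}-\frac{1}{[m+m_1]_q}\right) = \sum_{m=1}^{M}\frac{1}{[m]_q} - \sum_{m=m_1+1}^{M+m_1}\frac{1}{[m]_q},
\]
where the second sum arises from the first by the substitution $m\mapsto m-m_1$. Cancelling the common block $\sum_{m=m_1+1}^{M}\frac{1}{[m]_q}$ then gives
\[
S_M = \sum_{m=1}^{m_1}\frac{1}{[m]_q} - \sum_{m=M+1}^{M+m_1}\frac{1}{[m]_q}.
\]

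Next I would let $M\to\infty$. Since $0<q<1$, we have $[m]_q=\frac{1-q^m}{1-q}\to\frac{1}{1-q}$, hence $\frac{1}{[m]_q}\to 1-q$ as $m\to\infty$. The tail $\sum_{m=M+1}^{M+m_1}\frac{1}{[m]_q}$ consists of exactly $m_1$ summands, each tending to $1-q$, so it converges to $m_1(1-q)$. Therefore $S_M$ converges, and
\[
\sum_{m>0}\left(\frac{1}{[m]_q}-\frac{1}{[m+m_1]_q}\right)=\lim_{M\to\infty}S_M=\left(\frac{1}{[1]_q}+\cdots+\frac{1}{[m_1]_q}\right)-m_1(1-q),
\]
which is the asserted identity. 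Convergence of the original infinite series needs no separate argument since its partial sums $S_M$ converge; if desired one can instead note directly that $\frac{1}{[m]_q}-\frac{1}{[m+m_1]_q}=\frac{q^m(1-q^{m_1})}{(1-q)\,[m]_q[m+m_1]_q}=O(q^m)$.

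I do not expect a genuine obstacle here: the only points deserving a word of care are that the reindexing and cancellation require $M\ge m_1$, and that passing to the limit inside the tail sum is legitimate precisely because that sum always has the fixed number $m_1$ of terms. No analytic input beyond $0<q<1$ is used.
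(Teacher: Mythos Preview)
Your argument is correct: the telescoping rewrite of $S_M$ is valid, and since the tail block has a fixed number $m_1$ of terms each tending to $1-q$, the limit evaluation is justified. The paper states this lemma without proof (it is used as an elementary input in the computation of $(RHS_1)$), so there is nothing to compare against; your proof is exactly the kind of short verification one would expect and could be inserted verbatim.
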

\begin{proof}[Proof of Theorem \ref{thm1} for $\sigma>2$.]Since the functions $\zeta_q(s-n-2,n+2)$ and $\zeta_q(-n,s+n)$ are absolutely convergent in $\sigma>2$, the left-hand side of Theorem \ref{thm1} can be computed as 
\begin{align}
&\sum_{n=0}^\infty\sum_{0<m_1<m_2}\left(\frac{q^{(s-n-3)m_1+(n+1)m_2}}{[m_1]_q^{s-n-2}[m_2]_q^{n+2}}-\frac{q^{(-n-1)m_1+(s+n-1)m_2}}{[m_1]_q^{-n}[m_2]_q^{s+n}}\right)\notag\\&=\sum_{0<m_1<m_2}\sum_{n=0}^\infty\left(\frac{[m_1]_q}{[m_2]_q}q^{m_2-m_1}\right)^n\left(\frac{q^{(s-3)m_1+m_2}}{[m_1]_q^{s-2}[m_2]_q^{2}}-\frac{q^{-m_1+(s-1)m_2}}{[m_2]_q^{s}}\right)\notag\\
&=(1-q)^2\sum_{0<m_1<m_2}\frac{1}{(1-q^{m_2})^2-(1-q^{m_1})(1-q^{m_2})q^{m_2-m_1}}\left(\frac{q^{(s-3)m_1+m_2}}{[m_1]_q^{s-2}}-\frac{q^{-m_1+(s-1)m_2}}{[m_2]_q^{s-2}}\right).\label{label3}
\end{align}
Here, we denote the first and second terms of equation (\ref{label3}) by $(RHS_1)$ and $(RHS_2)$, respectively.  

$(RHS_1)$, $(RHS_2)$ can be computed as 
\begin{align}
(RHS_1)&=(1-q)^2\sum_{0<m_1<m_2}\frac{1}{(1-q^{m_2})^2-(1-q^{m_1})(1-q^{m_2})q^{m_2-m_1}}\frac{q^{(s-3)m_1+m_2}}{[m_1]_q^{s-2}}\notag\\&=(1-q)\sum_{0<m_1<m_2}\frac{q^{(s-2)m_1}}{[m_1]_q^{s-1}}\left(\frac{1}{(1-q^{m_2})-(1-q^{m_1})q^{m_2-m_1}}-\frac{1}{1-q^{m_2}}\right)\notag\\
&=\sum_{0<m_1}\frac{q^{(s-2)m_1}}{[m_1]_q^{s-1}}\sum_{m_1<m_2}\left(\frac{1-q}{1-q^{m_2-m_1}}-\frac{1-q}{1-q^{m_2}}\right)\notag\\
&=\sum_{0<m_1}\frac{q^{(s-2)m_1}}{[m_1]_q^{s-1}}\left(\frac{1}{[1]_q}+\frac{1}{[2]_q}+\cdots+\frac{1}{[m_1]_q}\right)-(1-q)\sum_{0<m_1}\frac{q^{(s-2)m_1}m_1}{[m_1]_q^{s-1}},\label{label1}
\end{align}

\begin{align}
(RHS_2)&=(1-q)^2\sum_{0<m_1<m_2}\frac{1}{(1-q^{m_2})^2-(1-q^{m_1})(1-q^{m_2})q^{m_2-m_1}}\frac{q^{-m_1+(s-1)m_2}}{[m_2]_q^{s-2}}\notag\\
&=(1-q)\sum_{1<m_2}\frac{q^{-m_1+(s-1)m_2}}{[m_2]_q^{s-1}}\sum_{0<m_1<m_2}\frac{1}{(1-q^{m_2})-(1-q^{m_1})q^{m_2-m_1}}\notag\\
&=(1-q)\sum_{1<m_2}\frac{q^{(s-2)m_2}}{[m_2]_q^{s-1}}\sum_{0<m_1<m_2}\frac{q^{m_2-m_1}}{1-q^{m_2-m_1}}\notag\\
&=\sum_{0<m_2}\frac{q^{(s-2)m_2}}{[m_2]_q^{s-1}}\left(\frac{q}{[1]_q}+\frac{q^2}{[2]_q}+\cdots+\frac{q^{m_2-1}}{[m_2-1]_q}\right).\label{label2}
\end{align}
In the computation of $(RHS_1)$, we used Lemma \ref{lem1}. Therefore, from equations (\ref{label1}) and (\ref{label2}), we can show
\begin{align}
&\sum_{n=0}^\infty\sum_{0<m_1<m_2}\left(\frac{q^{(s-n-3)m_1+(n+1)m_2}}{[m_1]_q^{s-n-2}[m_2]_q^{n+2}}-\frac{q^{(-n-1)m_1+(s+n-1)m_2}}{[m_1]_q^{-n}[m_2]_q^{s+n}}\right)\notag\\
&=\sum_{0<m_1}\frac{q^{(s-2)m_1}}{[m_1]_q^{s-1}}\left(\frac{1-q}{[1]_q}+\frac{1-q^2}{[2]_q}+\cdots+\frac{1-q^{m_1-1}}{[m_1-1]_q}+\frac{1}{[m_1]_q}\right)-(1-q)\sum_{0<m_1}\frac{q^{(s-2)m_1}m_1}{[m_1]_q^{s-1}}\notag\\
&=\sum_{0<m_1}\frac{q^{(s-2)m_1}}{[m_1]_q^{s-1}}\left(\frac{1}{[m_1]_q}-(1-q)\right)\notag\\
&=\zeta_q(s).
\end{align}
Finally we obtain the result.
\end{proof}
Next, we prove Theorem \ref{thm1} for $s$ satisfying $\sigma<2$ and $s\in\mathbb{C}\setminus\mathfrak{A}$ by using analytic continuation. Zhao \cite{Zhao} proved the meromorphic continuation for $\zeta_q(s_1,\ldots,s_r)$ by using the identity 
\begin{align}\label{Zhao}
\zeta_q(s_1,\ldots,s_r)=(1-q)^{s_1+\cdots+s_r}\sum_{n_1,\ldots,n_r=0}^\infty\prod_{j=1}^r\binom{s_j+n_j-1}{n_j}\frac{q^{j(s_j+n_j-1)}}{1-q^{s_j+\cdots+s_r+n_j+\cdots+n_r-r+j-1}}.
\end{align}
 In the proof, we use this identity.
\begin{lem}\label{lem6}There exists a constant $C_q$ depending on $q$ but independent of $\alpha$ such that
\[\left|\zeta_q(s-\alpha,\alpha)\right|\le C_qq^\alpha(1+q)^{-\alpha}(1+O(q^\alpha))q^{s-3}\]
holds.
\end{lem}
\begin{proof}
By using \eqref{Zhao}, we can estimate $\left|\zeta_q(s-\alpha,\alpha)\right|$ as follows
\begin{align*}
\left|\zeta_q(s-\alpha,\alpha)\right|&=\left|q^\alpha(1-q)^s\sum_{n_1,n_2=0}^\infty\binom{s-\alpha+n_1-1}{n_1}\binom{\alpha+n_2-1}{n_2}\frac{q^{n_1+s+2n_2-3}}{(1-q^{\alpha+n_2-1})(1-q^{s+n_1+n_2-2})}\right|\\
&\le C_qq^\alpha(1+O(q^\alpha))q^{s-3}(1-q)^s\left|\sum_{n_1,n_2=0}^\infty\binom{s-\alpha+n_1-1}{n_1}\binom{\alpha+n_2-1}{n_2}q^{n_1+2n_2}\right|\\
&=C_qq^\alpha(1+q)^{-\alpha}(1+O(q^\alpha))q^{s-3}.
\end{align*}
This finishes the proof.
\end{proof}
\begin{proof}[Proof of Theorem \ref{thm1}.]
  It follows from Lemma \ref{lem6} that the sum $\sum_{n=0}^\infty(\zeta_q(s-n-2,n+2)-\zeta_q(-n,s+n))$ uniformly converges on any compact subset of $\{s\in\mathbb{C}\setminus\mathfrak{A}\}$, and holomorphic on this region. Therefore we have the desired theorem.
\end{proof}
\section{Proof of Theorem \ref{thm2}}
\begin{dfn}For a positive integer $d$, a non-negative integer $D$, and $s\in\mathbb{C}$, we define
  \begin{align*}
    F_q^{(0)}(D;s;d)&\coloneqq\sum_{D<m}\frac{q^{(s-d-1)m}}{[m]_q^{s-d}}\sum_{m-D\le n_1\le \cdots\le n_d\le m}\frac{q^{n_1+\cdots+n_d}}{[n_1]_q\cdots[n_d]_q},\\
    F_q^{(1)}(D;s;d)&\coloneqq\sum_{D<t<m}\frac{q^{(s-d-2)t}\cdot q^{m-t}}{[t]_q^{s-d-1}[m-t]_q}\sum_{m-t\le n_1\le\cdots\le n_d\le m}\frac{q^{n_1+\cdots+n_d}}{[n_1]_q\cdots[n_d]_q},\\
    F_q^{(2)}(D;s;d)&\coloneqq\sum_{D<t<m}\frac{q^{(s-d-2)m}\cdot q^m}{[t]_q^{s-d-1}[m]_q}\sum_{m-t\le n_1\le\cdots\le n_d\le m}\frac{q^{n_1+\cdots+n_d}}{[n_1]_q\cdots[n_d]_q},\\
    F_q^{(3)}(D;s;d)&\coloneqq\sum_{D<t<m}\frac{q^{(s-d-2)m}\cdot q^{m-t}}{[m]_q^{s-d-1}[m-t]_q}\sum_{m-t\le n_1\le\cdots\le n_d\le m}\frac{q^{n_1+\cdots+n_d}}{[n_1]_q\cdots [n_d]_q}.
  \end{align*}
\end{dfn}
\begin{lem}\label{Flem1}
  The function $F_q^{(0)}(D;s;d)$ converges absolutely when $\sigma>1$. In addition, if $\sigma>d+2$, the functions $F_q^{(1)}(D;s;d)$, $F_q^{(2)}(D;s;d)$, and $F_q^{(3)}(D;s;d)$ converge absolutely. Moreover, we have
  \begin{equation}
    \left|F_q^{(i)}(D;s;d)\right|\ll\sum_{D<t}\frac{q^{\sigma-d-2}(\log t)^{d+1}}{[t]_q^{\sigma-d-1}}\label{label4}
  \end{equation}
  for $i=1,2,3$ and the implicit constant is independent of both $D$ and $s$.
\end{lem}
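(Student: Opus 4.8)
The plan is to prove all the assertions by pulling absolute values inside every summation and reducing to a handful of elementary $q$-series bounds, exploiting that $q$ is fixed (so any constant depending only on $q$ and $d$ is admissible) while being vigilant that no constant depends on $D$ or on $s$. The basic inputs I would record first are: for every positive integer $n$ one has $1 \le [n]_q \le (1-q)^{-1}$, so that $[n]_q^{c}$ is pinched between two positive constants for any fixed real $c$, and in particular $[n]_q^{-c} \le 1$ whenever $c \ge 0$; and $\frac{q^{n}}{[n]_q} \le q^{n}$, because $1-q^{n} \ge 1-q$ for $n \ge 1$, whence $\sum_{n \ge a} \frac{q^{n}}{[n]_q} \le \sum_{n\ge a}q^{n}=\frac{q^{a}}{1-q}$ for $a\ge 1$.

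For $F_q^{(0)}$ I would estimate the inner monotone $d$-fold sum: each summand is $\prod_{j=1}^{d} \frac{q^{n_j}}{[n_j]_q}$ with $n_j \in [m-D, m]$, so dropping the ordering constraint bounds the inner sum by $\big(\sum_{n \ge m-D} \frac{q^{n}}{[n]_q}\big)^{d} \le (1-q)^{-d} q^{d(m-D)}$. Multiplying by the modulus of the outer factor, $q^{(\sigma-d-1)m}[m]_q^{-(\sigma-d)}$, and using $m - D \ge 1$ together with the boundedness of $[m]_q^{-(\sigma-d)}$, the general term of $F_q^{(0)}$ is $\ll q^{(\sigma-1)m}$; since $\sum_{m>D} q^{(\sigma-1)m}$ converges precisely for $\sigma > 1$, this yields absolute convergence in that range (here the implied constant is permitted to depend on $s$).

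For $i = 1, 2, 3$ and $\sigma > d+2$ I would run the same scheme, now with two free indices $D < t < m$. Since $\sigma - d - 1 > 1 > 0$, each of the factors $[m]_q^{-(\sigma-d-1)}$ and $[m-t]_q^{-1}$ is $\le 1$ with a bound independent of $s$, and for $F_q^{(3)}$ the estimate $[m]_q \ge [t]_q$ for $m>t$ replaces $[m]_q^{-(\sigma-d-1)}$ by the factor $[t]_q^{-(\sigma-d-1)}$ already present on the right of (\ref{label4}). For the inner monotone sum over $n_1 \le \cdots \le n_d$ in $[m-t, m]$ I would substitute $n_j = (m-t) + k_j$ with $0 \le k_1 \le \cdots \le k_d \le t$ and use $[(m-t)+k_j]_q \ge [1+k_j]_q$, which bounds it by $q^{d(m-t)} \big(\sum_{k \ge 0} \frac{q^{k}}{[1+k]_q}\big)^{d} \ll q^{d(m-t)}$; this geometric decay in $m-t$ is the crucial point, and the stated factor $(\log t)^{d+1}$ is a harmless over-estimate, which also lets the finitely many small values of $t$ (where $\log t$ is tiny, and the corresponding contribution is $\ll q^{\sigma-d-2}\le 1$) be absorbed into the implied constant. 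Substituting and setting $\ell := m - t \ge 1$, the sum over $m$ collapses in each case to a convergent geometric series $\sum_{\ell \ge 1} q^{c\ell}$ with $c = c(d) > 0$ and an absolute bound, and the power of $q$ surviving in the variable $t$ is $q^{(\sigma-d-2)t}$ for $F_q^{(1)}$ and, after collecting the $m$-geometric sum, an exponent $\ge (\sigma-d-2)t$ for $F_q^{(2)}$ and $F_q^{(3)}$, hence dominated by $q^{(\sigma-d-2)t}$ since $0<q<1$. This gives $|F_q^{(i)}(D;s;d)| \ll \sum_{t>D} \frac{q^{(\sigma-d-2)t}(\log t)^{d+1}}{[t]_q^{\sigma-d-1}}$ with all constants depending only on $q$ and $d$, and absolute convergence of $F_q^{(i)}$ for $\sigma > d+2$ follows at once since, $[t]_q$ being bounded above and below, the right-hand side is comparable to $\sum_{t} q^{(\sigma-d-2)t}(\log t)^{d+1} < \infty$.

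I expect the main obstacle to be this inner monotone-sum estimate, which has to do two things simultaneously: decay geometrically in $m-t$, so that the sum over the larger variable $m$ reduces to an absolute constant rather than leaving an extra $t$-dependent or divergent factor, and keep the dependence on the interval length $t$ mild enough (no worse than a power of $\log t$) to land inside (\ref{label4}). The remaining work is bookkeeping — tracking the powers of $q$ through the substitution $\ell = m-t$ separately for each of $F_q^{(1)}$, $F_q^{(2)}$, $F_q^{(3)}$, and checking at every step (the $q$-power rearrangements, the bounds on powers of $[n]_q$, the geometric summations) that the constants produced are genuinely independent of both $D$ and $s$, which is exactly where the hypothesis $\sigma > d+2$ rather than merely $\sigma > 1$ is used.
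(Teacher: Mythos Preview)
Your argument is correct and in fact cleaner than the paper's. The paper first reduces $i=2,3$ to $i=1$ via the termwise comparison $|F_q^{(i)}(D;s;d)|\le F_q^{(1)}(D;\sigma;d)$, and then splits the double sum defining $F_q^{(1)}$ into two ranges $D<t\le m/2$ and $m/2<t<m$. In each range it estimates the inner ordered sum using $q^{n}/[n]_q\le 1/n$ (from $[n]_q\ge nq^{n-1}$), obtaining $\ll t^{d}/(m-t)^{d}$ in the first range and $\ll(\log m)^{d}$ in the second; the factor $(\log t)^{d+1}$ in~\eqref{label4} comes entirely from the second range. You instead use $q^{n}/[n]_q\le q^{n}$ (from $[n]_q\ge 1$), which preserves a geometric factor $q^{d(m-t)}$ in the inner sum and lets the $m$-sum collapse to a single geometric series with no case split, yielding the sharper bound $\sum_{D<t}q^{(\sigma-d-2)t}/[t]_q^{\sigma-d-1}$ with no logarithm at all. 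This is a genuine simplification available in the $q$-setting that the paper, which follows the template of the classical argument in~\cite{HMO}, does not exploit.

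Two minor points. First, for $F_q^{(2)}$ and $F_q^{(3)}$ the exponent in the $\ell$-series is really $\sigma-1$ rather than a function of $d$ alone; your ``absolute bound'' claim is still correct because $\sigma-1>d+1$ forces $\sum_{\ell\ge1}q^{(\sigma-1)\ell}\le q^{d+1}/(1-q^{d+1})$, but the phrasing ``$c=c(d)$'' is slightly loose. Second, when $D=0$ the term $t=1$ on the right of~\eqref{label4} vanishes because of the $\log t$, while your bound contributes $q^{\sigma-d-2}$ there; this cannot be absorbed with a constant independent of $s$, so your remark about ``absorbed into the implied constant'' does not quite work. The paper's own proof has the identical gap (its bound for the range $t\le m/2$ also lacks the log factor), and it does not affect the application in Lemma~\ref{Blem}, where either $D=m_a\ge1$ or only convergence is needed.
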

\begin{proof}
  The convergence of $F_q^{(0)}(D;s;d)$ is immediate. The convergence of $F_q^{(i)}(D;s;d)$ follows from equation (\ref{label4}).
  Since
  \[\left|F_q^{(i)}(D;s;d)\right|\le F_q^{(1)}(D;\sigma;d),\]
  it is enough to prove equation (\ref{label4}) for $i=1$ and $s\in\mathbb{R}_{>d+2}$. 
Write $F_q^{(1)}(D;s;d)$ as $A+B$ where

\begin{align*}
  A&\coloneqq\sum_{D<t\le \frac{m}{2}}\frac{q^{(s-d-2)t}\cdot q^{m-t}}{[t]_q^{s-d-1}[m-t]_q}\sum_{m-t\le x_1\le\cdots\le x_d\le m}\frac{q^{x_1+\cdots+x_d}}{[x_1]_q\cdots[x_d]_q},\\
  B&\coloneqq\sum_{\substack{D<t\\\frac{m}{2}<t<m}}\frac{q^{(s-d-2)t}\cdot q^{m-t}}{[t]_q^{s-d-1}[m-t]_q}\sum_{m-t\le x_1\le\cdots\le x_d\le m}\frac{q^{x_1+\cdots+x_d}}{[x_1]_q\cdots[x_d]_q}.
\end{align*}  
Since $q^{m-1}m\le [m]_q$ hold, we have 
\[\sum_{m-t\le x_1\le\cdots\le x_d\le m}\frac{q^{x_1+\cdots+x_d}}{[x_1]_q\cdots[x_d]_q}\le\sum_{m-t\le x_1\le\cdots\le x_d\le m}\frac{1}{x_1\cdots x_d}\ll\frac{t^d}{(m-t)^d},\]
and
\begin{align*}
  A&\ll\sum_{D<t\le\frac{m}{2}}\frac{q^{(s-d-2)t}\cdot q^{m-t}}{[t]_q^{s-d-1}t^{-d}(m-t)^{d+1}}\\
  &\ll\sum_{D<t\le\frac{m}{2}}\frac{q^{(s-d-2)t}}{[t]_q^{s-d-1}t^{-d}m^{d+1}}\\
  &\ll\sum_{D<t}\frac{q^{(s-d-2)t}}{[t]_q^{s-d-1}}.
\end{align*}
 We also have
\[ B\ll\sum_{\substack{D<t\\\frac{m}{2}<t<m}}\frac{(\log m)^d}{[t]_q^{s-d-1}[m-t]_q}\ll\sum_{D<t<m<2t}\frac{q^{(s-d-2)t}\cdot q^{m-t}(\log m)^d}{[t]_q^{s-d-1}[m-t]_q}.\]
Thus putting $n=m-t$, we get
\[B\ll\sum_{\substack{D<t\\1\le n<t}}\frac{q^{(s-d-2)t}\cdot q^{n} (\log(2t))^d}{[t]_q^{s-d-1}[n]}\ll\sum_{D<t}\frac{q^{(s-d-2)t}\cdot (\log(2t))^d\log t}{[t]_q^{s-d-1}}.\]
This finishes the proof.

\end{proof}
\begin{lem}\label{Flem2}If $\sigma>d+2$, we have
  \begin{equation}
    F_q^{(0)}(D;s;d)=F_q^{(1)}(D;s;d)-F_q^{(2)}(D;s;d)-F_q^{(3)}(D;s;d).
  \end{equation}
\end{lem}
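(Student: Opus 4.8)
The proof is a direct computation of the right-hand side --- the forward analogue of the telescoping computation carried out in the proof of Theorem~\ref{thm1} for $\sigma>2$. Since $\sigma>d+2$, Lemma~\ref{Flem1} guarantees that all four series converge absolutely, so every interchange of summation below is legitimate; this hypothesis is also exactly what makes the tails of the telescoping sums vanish.

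Write $a\coloneqq s-d-1$, so that $s-d-2=a-1$ and the region reads $\Re(a)>1$; set $b_k\coloneqq q^{(a-1)k}/[k]_q^{a}$ (the depth-one $q$-zeta summand at $k$), and abbreviate the inner chain by $G(x,y)\coloneqq\sum_{x\le n_1\le\cdots\le n_d\le y}\frac{q^{n_1+\cdots+n_d}}{[n_1]_q\cdots[n_d]_q}$. From the definitions one has immediately
\[
F_q^{(1)}-F_q^{(3)}=\sum_{D<t<m}\frac{q^{m-t}}{[m-t]_q}\,G(m-t,m)\,(b_t-b_m),
\qquad
F_q^{(2)}=\sum_{D<t<m}\frac{q^{am}}{[t]_q^{a}\,[m]_q}\,G(m-t,m),
\]
the point being that $b_t-b_m$ is a first difference. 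Using the basic $q$-arithmetic $[m]_q=[t]_q+q^{t}[m-t]_q$ one rewrites the summand of $F_q^{(2)}$ (via $\frac1{[t]_q^{a-1}}\bigl(\frac1{[t]_q}-\frac1{[m]_q}\bigr)=\frac{q^{t}[m-t]_q}{[t]_q^{a}[m]_q}$) into a shape that will cancel against the ``moving-window'' terms produced below.

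Next substitute $n=m-t$ and interchange the order of summation, pushing the $t$-sum innermost: for fixed $n$ and fixed inner indices $n_1\le\cdots\le n_d$ with $n_1\ge n$, the variable $t$ runs over $t\ge\max(D+1,\;n_d-n)$, and $\sum_{t\ge T}(b_t-b_{t+n})=\sum_{t=T}^{T+n-1}b_t$ telescopes --- this is precisely where $b_t\to0$, i.e.\ $\Re(a)>1$, is used. Splitting into the two cases $n_d\le n+D$ (where the whole chain lies in the width-$D$ window $[n,n+D]$, matching the range $[m-D,m]$ of $F_q^{(0)}$) and $n_d\ge n+D+1$, and then collecting the case-$B$ terms together with the rewritten $F_q^{(2)}$, all the moving-window contributions cancel. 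Finally, collapsing the resulting weight with the identity $(1-q)[m]_q=1-q^{m}$ --- exactly as in the last lines of the proof of Theorem~\ref{thm1} for $\sigma>2$ --- the surviving terms reassemble into $\sum_{m>D}\frac{q^{(s-d-1)m}}{[m]_q^{s-d}}\,G(m-D,m)=F_q^{(0)}(D;s;d)$.

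The main obstacle is the bookkeeping in this telescoping step: because the lower limit $m-t$ of the inner nested sum moves with $t$, the telescoping in $t$ does not decouple from the inner chain, and one must track carefully which chains are collected at each endpoint and with which weight. For $d=1$ --- where $G(x,y)=\sum_{j=x}^{y}q^{j}/[j]_q$ --- this is already the essential case and can be verified by hand; for general $d$ one carries out the same bookkeeping, peeling off the top index $n_d$ if one prefers to run a secondary induction on the depth. (Note that the analogous identity is \emph{false} for $d=0$, which is why the non-triviality of the inner sum, i.e.\ $d\ge1$, genuinely enters.)
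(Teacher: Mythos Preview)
Your proposal is not a proof but a strategy sketch, and the strategy as described does not close. The crux is your claim that after telescoping $\sum_{t\ge T}(b_t-b_{t+n})$ the ``case $A$'' piece (chains with $n\le n_1\le\cdots\le n_d\le n+D$) reassembles into $F_q^{(0)}(D;s;d)$, while ``case $B$'' cancels against $F_q^{(2)}$. But carrying out your telescoping literally, case $A$ equals
\[
\sum_{n\ge1}\frac{q^{n}}{[n]_q}\;\Biggl(\sum_{n\le n_1\le\cdots\le n_d\le n+D}\frac{q^{n_1+\cdots+n_d}}{[n_1]_q\cdots[n_d]_q}\Biggr)\sum_{j=D+1}^{D+n}b_j,
\]
a sum indexed by the \emph{bottom} $n$ of the window $[n,n+D]$, with an extra partial sum $\sum_j b_j$ as weight; whereas
\[
F_q^{(0)}(D;s;d)=\sum_{m>D}b_m\,\frac{q^{m}}{[m]_q}\sum_{m-D\le n_1\le\cdots\le n_d\le m}\frac{q^{n_1+\cdots+n_d}}{[n_1]_q\cdots[n_d]_q}
\]
is indexed by the \emph{top} $m$ of the window, with a single $b_m$. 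These do not match, and no application of $(1-q)[m]_q=1-q^m$ converts one into the other. Likewise, your rewriting of $F_q^{(2)}$ via $\tfrac1{[t]_q}-\tfrac1{[m]_q}=\tfrac{q^{t}[m-t]_q}{[t]_q[m]_q}$ is a correct identity but is never actually connected to the summand $\tfrac{q^{am}}{[t]_q^{a}[m]_q}$ of $F_q^{(2)}$. So the two ``hard'' steps you flag --- the moving-window cancellation and the final collapse --- are both asserted, not performed, and the first of them appears to be false as stated.

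The paper's argument avoids this difficulty entirely by \emph{not} telescoping. Instead it introduces a new bottom index $n_0=m-t$ and rewrites each of $F_q^{(1)},F_q^{(2)},F_q^{(3)}$ separately as $\sum_{t>D}b_t\cdot(\text{inner chain }n_0\le n_1\le\cdots\le n_d$ with a constraint depending on $t)$; the three constraints are, respectively, $n_d\le n_0+t$, $n_d>t$ with $n_d\le n_0+t$, and $n_d\le t$ with $n_0<t-D$. Subtracting gives the single constraint $t-D\le n_0\le\cdots\le n_d\le t$, which is exactly $F_q^{(0)}$. No telescoping in $t$, no residual weight mismatch. If you want to salvage your route you would need, at minimum, to reverse the roles of top and bottom of the window before summing --- which is precisely the change of variables the paper makes first.
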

\begin{proof}
Applying Lemma \ref{Flem1}, $F_q^{(i)}(D;s;d)$ converges absolutely for $i=1,2,3$.
We can show
\begin{align*}
   F_q^{(1)}(D;s;d)&=\sum_{D<t<m}\frac{q^{(s-d-2)t}\cdot q^{m-t}}{[t]_q^{s-d-1}[m-t]_q}\sum_{m-t\le n_1\le\cdots\le n_d\le m}\frac{q^{n_1+\cdots+n_d}}{[n_1]_q\cdots[n_d]_q}\\
    &=\sum_{D<t<m}\frac{q^{(s-d-2)t}}{[t]_q^{s-d-1}}\sum_{0<n_0\le n_1\le\cdots\le n_d\le n_0+t}\frac{q^{n_0+\cdots+n_d}}{[n_0]_q[n_1]_q\cdots[n_d]_q},\\
F_q^{(2)}(D;s;d)&=\sum_{D<t}\frac{q^{(s-d-2)n_{d+1}}\cdot q^{n_{d+1}}}{[t]_q^{s-d-1}}\sum_{0<n_{d+1}-t \le n_1\le\cdots\le n_d\le n_{d+1}}\frac{q^{n_1+\cdots+n_d}}{[n_1]_q\cdots[n_d]_q[n_{d+1}]_q}\\
&=\sum_{D<t}\frac{q^{(s-d-2)n_{d+1}}}{[t]_q^{s-d-1}}\sum_{\substack{0<n_0\le n_1\le \cdots\le n_d\\n_d>t\\n_d\le n_0+t}}\frac{q^{n_0+\cdots+n_d}}{[n_0]_q[n_1]_q\cdots[n_d]_q},\\
    F_q^{(3)}(D;s;d)&=\sum_{D<t<N}\frac{q^{(s-d-2)N}\cdot q^{N-t}}{[N]_q^{s-d-1}[N-t]_q}\sum_{N-t\le n_1\le\cdots\le n_d\le N}\frac{q^{n_1+\cdots+n_d}}{[n_1]_q\cdots [n_d]_q}\\
    &=\sum_{D<N}\frac{q^{(s-d-2)N}}{[N]_q^{s-d-1}}\sum_{\substack{0<n_0\le n_1\le\cdots\le n_d\le N\\n_0<N-D}}\frac{q^{n_0+\cdots+n_d}}{[n_0]_q[n_1]_q\cdots [n_d]_q}\\
    &=\sum_{D<t}\frac{q^{(s-d-2)t}}{[t]_q^{s-d-1}}\sum_{\substack{0<n_0\le n_1\le\cdots\le n_d\le t\\n_0<t-D}}\frac{q^{n_0+\cdots+n_d}}{[n_0]_q[n_1]_q\cdots [n_d]_q},
  \end{align*}
  and we have
  \begin{align*}
    &F_q^{(1)}(D;s;d+1)-F_q^{(2)}(D;s;d)-F_q^{(3)}(D;s;d)\\&=\sum_{D<t}\frac{q^{(s-d-2)t}}{[t]_q^{s-d-1}}\sum_{t-D\le n_0\le n_1\le \cdots\le n_d\le t}\frac{q^{n_0+\cdots+n_d}}{[n_0]_q[n_1]_q\cdots[n_d]_q}\\
    &=F_q^{(0)}(D;s;d+1).
  \end{align*}
  Thus we obtain the result.
\end{proof}
\begin{lem}\label{Blem}
Here $\sigma_i$ denote the real part of $s_i$.
Let $a$ be a non-negative integer and $b$ an integer with $b\ge2$. Assume that $\sigma>b$ and \[\sigma+\sigma_{a-k+1}+\cdots \sigma_a>k+b\] hold for $k=1,\ldots,a$, then $G_q^{(a,b)}(s_1,\ldots,s_a;s)$ is a well-defined function and  
\begin{equation}
  G_q^{(a,b)}(s_1,\ldots,s_a;s)=\sum_{0<m_1<\cdots<m_a<m}\frac{q^{(s_1-1)m_1+\cdots+(s_a-1)m_a+(s-b)m}}{[m_1]_q^{s_1}\cdots[m_a]_q^{s_a}[m]_q^{s-b+1}}\sum_{m-m_{a+1}\le x_1\le\cdots\le x_{b-2}\le m}\frac{q^{x_1+\cdots+x_{b-2}}}{[x_1]_q\cdots[x_{b-2}]_q}\label{label8}
\end{equation}
hold. Here we understand that $m-m_a=m$ if $a=0$.
\end{lem}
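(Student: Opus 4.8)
The plan is to prove \eqref{label8} by induction on $b$, organising the inductive step around the auxiliary functions $F_q^{(0)},\dots,F_q^{(3)}$: the geometric summation that already occurred in the proof of Theorem \ref{thm1} reduces $G_q^{(a,b)}$ to the combination of $F_q^{(1)},F_q^{(2)},F_q^{(3)}$ appearing in Lemma \ref{Flem2}, which that lemma collapses back to a single $F_q^{(0)}$ — and $F_q^{(0)}$ is exactly the shape of the right‑hand side of \eqref{label8}. Everything stays inside the region named in the statement, where, by the bound \eqref{label4} of Lemma \ref{Flem1} (with its $D$‑uniform implied constant), all the series in sight converge absolutely; so well‑definedness of $G_q^{(a,b)}$ and the legitimacy of every rearrangement come for free, and no analytic continuation is needed.

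Concretely, writing $D=m_a$ (with $D=0$ when $a=0$, as in the statement), the right‑hand side of \eqref{label8} is nothing but
\[
  \sum_{0<m_1<\cdots<m_a}\Bigl(\prod_{j=1}^a\frac{q^{(s_j-1)m_j}}{[m_j]_q^{s_j}}\Bigr)\,F_q^{(0)}(m_a;s;b-1),
\]
straight from the definition of $F_q^{(0)}$. Read this way, the claim also makes sense, and is trivially true, for $b=1$, since $F_q^{(0)}(D;s;0)=\sum_{D<m}q^{(s-1)m}/[m]_q^{s}$ makes the displayed sum equal to $\zeta_q(s_1,\dots,s_a,s)=G_q^{(a,1)}(s_1,\dots,s_a;s)$; I would take this as the base of the induction. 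For the step, assume the identity in the above form for $b-1$ and every $a$, and start from the defining recursion
\[
  G_q^{(a,b)}(s_1,\dots,s_a;s)=\sum_{n\ge0}G_q^{(a+1,b-1)}(s_1,\dots,s_a,s-n-b;n+b)-\sum_{n\ge0}G_q^{(a+1,b-1)}(s_1,\dots,s_a,-n,s+n).
\]
Substituting the inductive hypothesis into each $G_q^{(a+1,b-1)}$ introduces a new variable $p>m_a$ carrying a factor of the form $q^{(\gamma-1)p}/[p]_q^{\gamma}$ together with an attached $F_q^{(0)}(p;\,\cdot\,;b-2)$; unwinding the latter brings in the top variable $m>p$ and the inner chain. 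Absolute convergence in the stated region — from \eqref{label4} together with the fact that the resulting sum over $n$ is geometric of ratio $q^{\,m-p}[p]_q/[m]_q<q$ — lets one push the $n$‑summation innermost and evaluate it: this merges the two halves and, using $[m]_q-q^{\,m-p}[p]_q=[m-p]_q$, produces the factor $[m-p]_q^{-1}$, exactly as in the proof of Theorem \ref{thm1}, now carried out with the spectator prefix $\prod_j q^{(s_j-1)m_j}/[m_j]_q^{s_j}$ and with every innermost index constrained to exceed $m_a$. What remains is precisely the combination of $F_q^{(1)},F_q^{(2)},F_q^{(3)}$ at cutoff $m_a$ that Lemma \ref{Flem2} rewrites as $F_q^{(0)}(m_a;s;b-1)$; reinstating the prefix gives \eqref{label8}.

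The step I expect to be the actual work is the bookkeeping that identifies, after the geometric summation, the exact combination of $F_q^{(1)},F_q^{(2)},F_q^{(3)}$ — so that Lemma \ref{Flem2} applies verbatim — taking care of the index shifts among the $d$‑parameters of the $F_q^{(i)}$ and of the change of variables $n_0=m-p$ that turns the collapsed variable $p$ into the first link of the longer inner chain. Verifying absolute convergence of the finitely many nested series from \eqref{label4} (which at the same time yields that $G_q^{(a,b)}$ is a well‑defined holomorphic function on the stated region) and justifying the interchanges of summation are then routine.
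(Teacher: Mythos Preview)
Your proof is correct and follows essentially the same route as the paper: induction on $b$, geometric summation over $n$ using $[m]_q-q^{\,m-p}[p]_q=[m-p]_q$, identification of the resulting pieces with $F_q^{(1)}-F_q^{(2)}-F_q^{(3)}$ at cutoff $m_a$, and an appeal to Lemmas \ref{Flem1} and \ref{Flem2} for absolute convergence and the collapse to $F_q^{(0)}(m_a;s;b-1)$. The only cosmetic difference is that you take $b=1$ as the base case (which makes the formula tautological), whereas the paper starts at $b=2$ and handles it by repeating the computation from the proof of Theorem \ref{thm1}; your choice is slightly cleaner, and you have also implicitly corrected the index typo in the statement (the inner chain should have $b-1$ variables, matching $F_q^{(0)}(m_a;s;b-1)$, as the paper's own proof and the proof of Theorem \ref{thm2} confirm).
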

\begin{proof}
Note that the right-hand side of equation (\ref{label8}) converges absolutely. The proof is by induction on $b$. We can prove the case $b=2$ in the similar manner as in the proof of Theorem \ref{thm1} for $\sigma>2$. For $b\ge 3$, by the induction hypothesis, we have
  \begin{align*}
    &G_q^{(a,b)}(s_1,\ldots,s_a;s)\\&=\sum_{n=0}^\infty G_q^{(a+1,b-1)}(s_1,\ldots,s_a,s-n-b;n+b)-\sum_{n=0}^\infty G_q^{(a+1,b-1)}(s_1,\ldots,s_a,-n;s+n)\\
    &=\begin{multlined}[t]\sum_{n=0}^\infty\sum_{0<m_1<\cdots<m_{a+1}<m}\frac{q^{(s_1-1)m_1+\cdots+(s_a-1)m_a+(s-n-b-1)m_{a+1}+(n+1)m}}{[m_1]_q^{s_1}\cdots[m_a]_q^{s_a}[m_{a+1}]_q^{s-n-b}[m]_q^{n+2}}\\\times\sum_{m-m_{a+1}\le x_1\le\cdots\le x_{b-2}\le m}\frac{1}{[x_1]_q\cdots[x_{b-2}]_q}\\-\sum_{n=0}^\infty\sum_{0<m_1<\cdots<m_{a+1}<m}\frac{q^{(s_1-1)m_1+\cdots+(s_a-1)m_a+(-n-1)m_{a+1}+(s+n-b+1)m}}{[m_1]_q^{s_1}\cdots[m_a]_q^{s_a}[m_{a+1}]_q^{-n}[m]_q^{s+n-b+2}}\\\times\sum_{m-m_{a+1}\le x_1\le\cdots\le x_{b-2}\le m}\frac{q^{x_1+\cdots+x_{b-2}}}{[x_1]_q\cdots[x_{b-2}]_q}.\end{multlined}
  \end{align*}
  Then we have
  \begin{align*}
&\begin{multlined}[t]\sum_{n=0}^\infty\sum_{0<m_1<\cdots<m_{a+1}<m}\frac{q^{(s_1-1)m_1+\cdots+(s_a-1)m_a+(s-n-b-1)m_{a+1}+(n+1)m}}{[m_1]_q^{s_1}\cdots[m_a]_q^{s_a}[m_{a+1}]_q^{s-n-b}[m]_q^{n+2}}\\\times\sum_{m-m_{a+1}\le x_1\le\cdots\le x_{b-2}\le m}\frac{q^{x_1+\cdots+x_{b-2}}}{[x_1]_q\cdots[x_{b-2}]_q}\end{multlined}\\
    &=\begin{multlined}[t]\sum_{0<m_1<\cdots<m_{a+1}<m}\frac{q^{(s_1-1)m_1+\cdots+(s_a-1)m_a+(s-b)m_{a+1}}}{[m_1]_q^{s_1}\cdots[m_a]_q^{s_a}[m_{a+1}]_q^{s-b+1}}\left(\frac{q^{m-m_{a+1}}}{[m-m_{a+1}]_q}-\frac{q^m}{[m]_q}\right)\\\times\sum_{m-m_{a+1}\le x_1\le\cdots\le x_{b-2}\le m}\frac{q^{x_1+\cdots+x_{b-2}}}{[x_1]_q\cdots[x_{b-2}]_q}\end{multlined}\\
    &=\sum_{0<m_1<\cdots<m_a}\frac{q^{(s_1-1)m_1+\cdots+(s_a-1)m_a}}{[m_1]_q^{s_1}\cdots[m_a]_q^{s_a}}\left(F_q^{(1)}(m_a;s;b-2)-F_q^{(2)}(m_a;s;b-2)\right),
  \end{align*}
  and
  \begin{align*}
    &\begin{multlined}[t]\sum_{n=0}^\infty\sum_{0<m_1<\cdots<m_{a+1}<m}\frac{q^{(s_1-1)m_1+\cdots+(s_a-1)m_a+(-n-1)m_{a+1}+(s+n-b+1)m}}{[m_1]_q^{s_1}\cdots[m_a]_q^{s_a}[m_{a+1}]_q^{-n}[m]_q^{s+n-b+2}}\\\times\sum_{m-m_{a+1}\le x_1\le\cdots\le x_{b-2}\le m}\frac{q^{x_1+\cdots+x_{b-2}}}{[x_1]_q\cdots[x_{b-2}]_q}\end{multlined}\\
    &=
    \begin{multlined}[t]\sum_{0<m_1<\cdots<m_{a+1}<m}\frac{q^{(s_1-1)m_1+\cdots+(s_a-1)m_a+(s-b)m}\cdot q^{m-m_{a+1}}}{[m_1]_q^{s_1}\cdots[m_a]_q^{s_a}[m]_q^{s-b+1}[m-m_{a+1}]_q}\\\times\sum_{m-m_{a+1}\le x_1\le\cdots\le x_{b-2}\le m}\frac{q^{x_1+\cdots+x_{b-2}}}{[x_1]_q\cdots[x_{b-2}]_q}\end{multlined}\\
    &=\sum_{0<m_1<\cdots<m_a}\frac{q^{(s_1-1)m_1+\cdots+(s_a-1)m_a}}{[m_1]_q^{s_1}\cdots[m_a]_q^{s_a}}F_q^{(3)}(m_a;s;b-2).
  \end{align*}
   Since the series 
   \[\sum_{0<m_1<\cdots<m_a}\frac{q^{(s_1-1)m_1+\cdots+(s_a-1)m_a}}{[m_1]_q^{s_1}\cdots [m_a]_q^{s_a}}F_q^{(i)}(m_a;s;b-2)\]
   for $i=1,2,3$ is absolutely convergent by Lemma \ref{Flem1}, we see that $G_q^{(a,b)}(s_1,\ldots,s_a;s)$ is also absolutely convergent, and furthermore, we have
  \begin{align*}
    &G_q^{(a,b)}(s_1,\ldots,s_a;s)\\
    &=\begin{multlined}[t]\sum_{0<m_1<\cdots<m_a}\frac{q^{(s_1-1)m_1+\cdots+(s_a-1)m_a}}{[m_1]_q^{s_1}\cdots[m_a]_q^{s_a}}\\\times\left(F_q^{(1)}(m_a;s;b-2)-F_q^{(2)}(m_a;s;b-2)-F_q^{(3)}(m_a;s;b-2)\right)\end{multlined}\\
    &=\sum_{0<m_1<\cdots<m_a}\frac{q^{(s_1-1)m_1+\cdots+(s_a-1)m_a}}{[m_1]_q^{s_1}\cdots[m_a]_q^{s_a}}F_q^{(0)}(m_a;s;b-1)\\
    &=\sum_{0<m_1<\cdots<m_a}\frac{q^{(s_1-1)m_1+\cdots+(s_a-1)m_a}}{[m_1]_q^{s_1}\cdots[m_a]_q^{s_a}[m]_q^{s-b+1}}\sum_{m-m_{a+1}\le x_1\le\cdots\le x_{b-1}\le m}\frac{q^{x_1+\cdots+x_{b-1}}}{[x_1]_q\cdots[x_{b-1}]_q}
  \end{align*}
  from Lemma \ref{Flem2}. Hence the lemma is proved.
\end{proof}
\begin{proof}[Proof of Theorem \ref{thm2}]
From Lemma \ref{Blem}, we have
  \begin{align*}
    G_q^{(0,b)}(s)&=\sum_{0<m}\frac{q^{(s-b)m}}{[m]_q^{s-b+1}}\sum_{m\le x_1\le\cdots\le x_{b-2}\le m}\frac{q^{x_1+\cdots+x_{b-1}}}{[x_1]_q\cdots[x_{b-1}]_q}\\
    &=\zeta_q(s).
  \end{align*}
  This completes the proof.
\end{proof}


\begin{thebibliography}{99}
\bibitem{Dav}D.~M.~Bradley, On the sum formula for multiple $q$-zeta values, Rocky Mountain J.~Math.~37 (2007), 1427--1434.
\bibitem{HMO}M.~Hirose, H.~Murahara, T.~Onozuka, Sum formula for multiple zeta function, Ramanujan J.~65 (2024), 1607--1619.
\bibitem{Zhao}J.~Zhao, $q$-multiple zeta functions and $q$-multiple polylogarithms, Ramanujan J.~(2007) 14, 189--221.
\end{thebibliography}
\end{document}